\def\co{\colon\thinspace}
\DeclareMathAlphabet{\mathsfsl}{OT1}{cmss}{m}{sl}
\newcommand{\tensor}[1]{\mathsfsl{#1}}
\newcommand{\spinc}{\mathrm{Spin}^c}
\newcommand{\im}{\mathrm{im}}
\newcommand{\bhfminus}{\mathbf{HF}^-}
\newcommand{\bhfle}{\mathbf{HF}^{\le0}}
\newcommand{\bFle}{\mathbf{F}^{\le0}}
\newtheorem{thm}{Theorem}[section]
\newtheorem{lem}[thm]{Lemma}
\newtheorem{prop}[thm]{Proposition}
\theoremstyle{definition}
\newtheorem{rem}[thm]{Remark}
\begin{document}

\title{The next-to-top term in knot Floer homology}

\author{
{\Large Yi NI}\\{\normalsize Department of Mathematics, Caltech, MC 253-37}\\
{\normalsize 1200 E California Blvd, Pasadena, CA
91125}\\{\small\it Emai\/l\/:\quad\rm yini@caltech.edu}}

\date{}
\maketitle

\begin{abstract}
Let $K$ be a null-homologous knot in a generalized L-space $Z$ with $b_1(Z)\le1$. Let $F$ be a Seifert surface of $K$ with genus $g$. We show that if $\widehat{HFK}(Z,K,[F],g)$ is supported in a single $\mathbb Z/2\mathbb Z$--grading, then \[\mathrm{rank}\widehat{HFK}(Z,K,[F],g-1)\ge\mathrm{rank}\widehat{HFK}(Z,K,[F],g).\] 
\end{abstract}

\section{Introduction}

Knot Floer homology is an invariant for null-homologous knots in $3$--manifolds introduced by Ozsv\'ath--Szab\'o \cite{OSzKnot} and Rasmussen \cite{RasThesis}. Suppose that $F$ is a Thurston norm minimizing Seifert surface for a null-homologous knot $K\subset Z$, then $\widehat{HFK}(Z,K,[F],g(F))$, which is known as ``the topmost term'' in knot Floer homology, captures a lot of information about the knot complement. For example, $\widehat{HFK}(Z,K,[F],g(F))$ always has positive rank \cite{OSzGenus}. Moreover, $\widehat{HFK}(Z,K,[F],g(F))$ has rank $1$ if and only if $F$ is a fiber of a fibration of $Z\setminus K$ over $S^1$ \cite{Gh,NiFibred}. 

It is natural to ask if one can say similar things for other terms in $\widehat{HFK}(Z,K)$. Baldwin and Vela-Vick \cite[Question~1.11]{BVV} asked whether $\widehat{HFK}(S^3,K,g(K)-1)$ is always nontrivial. More specifically, Sivek \cite[Question~1.12]{BVV} asked whether we always have 
\begin{equation}\label{eq:SivekConj}
\mathrm{rank}\widehat{HFK}(S^3,K,g(K)-1)\ge\mathrm{rank}\widehat{HFK}(S^3,K,g(K)).
\end{equation}
This inequality has been known for knots with thin knot Floer homology \cite{OSzAlternating}, L-space knots \cite{HW}, fibered knots in any closed oriented $3$--manifolds \cite{BVV}. In this paper, we will prove (\ref{eq:SivekConj}) when $\widehat{HFK}(Z,K,[F],g)$ is supported in a single $\mathbb Z/2\mathbb Z$--grading.

Recall that a closed, oriented $3$--manifold $Z$ is a {\it generalized L-space} if \[HF_{\mathrm{red}}(Z)=0.\] In \cite{OSzAnn2}, an absolute $\mathbb Z/2\mathbb Z$--grading was defined on Heegaard Floer homology. When the underlying Spin$^c$ structure is torsion, one can define an absolute $\mathbb Q$--grading.

\begin{thm}\label{thm:SecondTerm}
Let $Z$ be a generalized L-space with $b_1(Z)\le1$, and let $K\subset Z$ be a null-homologous knot with a Thurston norm minimizing Seifert surface $F$ of genus $g>0$. Suppose that $\widehat{HFK}(Z,K,[F],g)$ is supported in a single $\mathbb Z/2\mathbb Z$--grading. Then for any $d\in\mathbb Q$, we have
\[
\mathrm{rank}\widehat{HFK}_{d-1}(Z,K,[F],g-1)\ge\mathrm{rank}\widehat{HFK}_d(Z,K,[F],g).
\]
\end{thm}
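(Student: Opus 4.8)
The plan is to recast the inequality as the positivity of $\dim\operatorname{coker}-\dim\ker$ for the topmost differential in the Alexander spectral sequence, and then to extract that positivity from the generalized L-space hypothesis by means of the minus-flavored complexes set up in the paper. Work over a field $\mathbb F$, so that all ranks are dimensions. Write $A=\widehat{HFK}(Z,K,[F],g)$ and $B=\widehat{HFK}(Z,K,[F],g-1)$, and let $d_1\co A\to B$ be the differential on the $E_1$-page of the spectral sequence of the Alexander filtration on $\widehat{CF}(Z)$; it lowers the Maslov grading by one. Since $g$ is the top Alexander grading (here $g>0$ guarantees $g-1$ is still in the symmetric range and that the top and bottom gradings differ), nothing maps into $A$, so for each $d$ one has $\dim\widehat{HFK}_d(g)=\dim\ker(d_1)_d+\operatorname{rank}(d_1)_d$ and $\dim\widehat{HFK}_{d-1}(g-1)=\dim\operatorname{coker}(d_1)_{d-1}+\operatorname{rank}(d_1)_d$. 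Subtracting gives
\[
\dim\widehat{HFK}_{d-1}(Z,K,[F],g-1)-\dim\widehat{HFK}_d(Z,K,[F],g)=\dim\operatorname{coker}(d_1)_{d-1}-\dim\ker(d_1)_d,
\]
so Theorem~\ref{thm:SecondTerm} is \emph{equivalent} to $\dim\operatorname{coker}(d_1)_{d-1}\ge\dim\ker(d_1)_d$ for all $d$. This is the first step I would carry out, since it isolates exactly what must be proved.

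Next I would organize the two relevant gradings into the quotient complex $Q=\widehat{CF}(Z)/\mathcal F_{g-2}$, where $\mathcal F_\bullet$ is the Alexander filtration. The induced differential on $Q$ is precisely $d_1$, so $H_*(Q)=\ker(d_1)\oplus\operatorname{coker}(d_1)$, with the kernel sitting in filtration level $g$ and the cokernel in level $g-1$. The short exact sequence $0\to\mathcal F_{g-2}\to\widehat{CF}(Z)\to Q\to0$ then produces a long exact sequence relating $\ker(d_1)$ and $\operatorname{coker}(d_1)$ to $\widehat{HF}(Z)$ and to $H_*(\mathcal F_{g-2})$. The point of this reorganization is that the permanent cycles inside $A$, namely the subspace $\ker(D|_A)\subseteq\ker(d_1|_A)$ for the full differential $D=d_1+d_2+\cdots$ of a reduced model, form exactly the top filtration piece of $\widehat{HF}(Z)$; by the single-$\mathbb Z/2\mathbb Z$-grading hypothesis these survivors all lie in one parity $\epsilon$, and likewise all of $\ker(d_1|_A)$ lies in parity $\epsilon$ while $\operatorname{im}(d_1)\subseteq B$ lies in parity $\epsilon+1$.

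The generalized L-space hypothesis enters through the minus theory. Because $HF_{\mathrm{red}}(Z)=0$, the module $HF^-(Z)$ is $U$-torsion-free (free of rank $1$ per $\spinc$ structure when $b_1=0$, and of rank two in the torsion structure when $b_1=1$, as for $S^1\times S^2$). I would exploit this through the exact sequence of the subcomplex $S=C\{i\le 0\}\setminus\{i=0,\ j=g\}$ inside $C\{i\le0\}$, whose quotient is $C\{i=0,j=g\}$ computing $A$; the associated connecting map $A\to H_{*-1}(S)$ covers $d_1\co A\to B$ after projecting to the next filtration level, and is exactly the map whose defect from injectivity is measured by the $U$-action. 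Torsion-freeness of $HF^-(Z)$ forces the classes of $A$ that fail to support a nonzero differential to be matched, via the $U$-tower structure, by classes in Alexander grading $g-1$, which is what feeds $\operatorname{coker}(d_1)_{d-1}$. The single-grading hypothesis is what makes this matching grading-consistent: the survivors occupy the single parity $\epsilon$, so their $U$-partners land in parity $\epsilon+1$, precisely where the cokernel lives, yielding $\dim\operatorname{coker}(d_1)_{d-1}\ge\dim\ker(d_1)_d$.

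The main obstacle I anticipate is the higher differentials $d_r$ ($r\ge2$) emanating from the top. These enlarge $\ker(d_1|_A)$ beyond the genuine survivors $\ker(D|_A)$ without enlarging $\operatorname{coker}(d_1)$, so the crux is to show, using that $A$ is concentrated in the single grading $\epsilon$ together with the torsion-free structure of $HF^-(Z)$, that every $d_1$-cycle in $A$ is either a permanent cycle or is already matched in grading $g-1$; equivalently, that the discrepancy $\ker(d_1|_A)/\ker(D|_A)$ is absorbed by the minus-theory bookkeeping rather than by cancellation into deeper gradings. A secondary difficulty is the case $b_1(Z)=1$, where $\widehat{HF}(Z)$ no longer sits in a single parity and the rank-two tower of $HF^-(Z)$ must be handled, presumably by reducing to the $b_1=0$ analysis in each $\spinc$ structure. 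I expect these two points, and not the formal reduction of the first paragraph, to carry the real weight of the argument.
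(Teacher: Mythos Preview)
Your first reduction---rewriting the inequality as $\dim\operatorname{coker}(d_1)_{d-1}\ge\dim\ker(d_1)_d$---is correct and harmless, but the third paragraph, where the actual content should live, is not a proof. The sentence ``torsion-freeness of $HF^-(Z)$ forces the classes of $A$ that fail to support a nonzero differential to be matched, via the $U$-tower structure, by classes in Alexander grading $g-1$'' names no concrete mechanism. The $U$-action on $\widehat{HF}$ is zero, and on $HF^-$ it shifts Maslov grading by $-2$, not $-1$, so no $U$-tower places a partner in grading $d-1$. Nor does torsion-freeness of $HF^-(Z)$ say anything about how the Alexander filtration interacts with the $j$-direction at levels $g$ and $g-1$ specifically. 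You correctly flag the higher differentials $d_r$ ($r\ge2$) as the obstacle, but then leave it unresolved; that obstacle \emph{is} the theorem, not a secondary difficulty.

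The paper's argument is structurally different and does not run through the spectral sequence of $\widehat{CF}(Z)$ or through torsion-freeness of $HF^-(Z)$. Assuming the rank inequality fails, a chain-level computation in $CFK^{\infty}$ (Theorem~\ref{thm:General}) produces a class in $H_*(C\{i<0,\ j\ge g-2\})$ on which $U$ acts nontrivially; here the single-parity hypothesis is used only to force $\widehat{HFK}_{d\pm1}(Z,K,[F],g)=0$, which the construction needs. Proposition~\ref{prop:ZeroSurg} (this is where the L-space hypothesis enters, to make $v$ and $v+h$ surjective) then transports the nontrivial $U$-action to $HF^+(Z_0(K),[\widehat F],g-2)$. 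Finally, Theorem~\ref{thm:ClosedSecond}---proved via the $H_1$-action on Heegaard Floer homology and a K\"unneth splitting of the identity cobordism---says $U=0$ on that group, using $b_1(Z)\le1$ only to guarantee $b_1(Z_0(K))\le2$ so that the homological hypothesis is met. For $g\le2$ one passes to two- or three-fold connected sums to reach genus $>2$. None of the essential ingredients---zero surgery, Theorem~\ref{thm:ClosedSecond}, the $H_1$-action, connected sums---appear in your plan, and I do not see how to bypass them along the lines you sketch.
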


To prove Theorem~\ref{thm:SecondTerm}, we need the following result about $HF^+$.

\begin{thm}\label{thm:ClosedSecond}
Let $Y$ be a closed oriented $3$--manifold. Suppose that $G\subset Y$ is a closed oriented surface of genus $g>2$. If 
there exist two elements $\gamma_1,\gamma_2\in H_1(G)$ with $\gamma_1\cdot\gamma_2\ne0$, such that their images in $H_1(Y)$ are linearly dependent,
then the map $U$ is trivial on $HF^+(Y,[G],g-2;\mathbb Q)$.
\end{thm}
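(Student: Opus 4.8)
The plan is to exploit the $\Lambda^*\bigl(H_1(Y;\mathbb{Q})\bigr)$-module structure on $HF^+(Y;\mathbb{Q})$, in which each class $\gamma$ acts by an operator $A_\gamma$ that drops the Maslov grading by one, commutes with $U$, satisfies $A_\gamma^2=0$, preserves every $\mathrm{Spin}^c$-summand, and—crucially—depends $\mathbb{Q}$-linearly only on the image of $\gamma$ in $H_1(Y;\mathbb{Q})$. First I would translate the homological hypothesis into a statement about this action. Since $\gamma_1\cdot\gamma_2\neq0$ while the images of $\gamma_1,\gamma_2$ in $H_1(Y;\mathbb{Q})$ are linearly dependent, a short two-variable case analysis produces a rational combination $\gamma=a\gamma_1+b\gamma_2\in H_1(G;\mathbb{Q})$ whose image in $H_1(Y;\mathbb{Q})$ vanishes, yet with $\gamma\cdot\gamma_2\neq0$ (using $\gamma_2\cdot\gamma_2=0$). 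Setting $\delta=\gamma_2$ I obtain a symplectic pair $\gamma\cdot\delta\neq0$ on $G$ in which $\gamma$ is rationally null-homologous in $Y$. By $\mathbb{Q}$-linearity of the action, $A_\gamma=0$ on all of $HF^+(Y;\mathbb{Q})$.

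The heart of the argument is an adjunction-type relation on the next-to-top term: I claim that on $HF^+(Y,[G],g-2;\mathbb{Q})$ one has $U=c\,A_\gamma\circ A_\delta$ for a nonzero rational constant $c$ determined by $\gamma\cdot\delta$. To establish this I would fix a $\mathrm{Spin}^c$ structure $\mathfrak{s}$ with $\langle c_1(\mathfrak{s}),[G]\rangle=2(g-2)$; for $g>2$ this is non-torsion, so $HF^+(Y,\mathfrak{s};\mathbb{Q})$ is finitely generated and $U$ already acts nilpotently there, reducing the task to promoting ``nilpotent'' to ``zero.'' I would then work in a Heegaard diagram subordinate to $G$ in the sense of the genus-bound machinery, chosen so that $\gamma$ and $\delta$ are represented by controlled curves on the diagram. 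In such a diagram the two topmost $[G]$-levels are accessible: the adjunction inequality kills level $g$, and near the top the generators, the differential, the $U$-action, and the operators $A_\gamma,A_\delta$ are governed by rigid holomorphic disks. The delicate point—and the main obstacle—is the identification that the composite $A_\gamma\circ A_\delta$ enumerates the same rigid configurations as $U$ on level $g-2$, with $\gamma\cdot\delta$ appearing as the matching multiplicity; it is precisely here that the hypothesis $g>2$ is used, ensuring $g-2\geq1$ so that the cut-down surface retains positive genus and the structural results for the top two levels remain in force.

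Combining the two steps gives the theorem immediately: since $A_\gamma=0$, the relation yields $U=c\,A_\gamma\circ A_\delta=0$ on $HF^+(Y,[G],g-2;\mathbb{Q})$. As a consistency check I would test the relation against the fibered model $Y=G\times S^1$, where $H_1(G)$ injects into $H_1(Y;\mathbb{Q})$, so that no rationally null-homologous $\gamma$ with $\gamma\cdot\delta\neq0$ exists, the hypothesis fails, and $U$ need not vanish on the next-to-top term. This confirms that the identity $U\doteq A_\gamma\circ A_\delta$ can force $U=0$ only under exactly the stated homological condition, and it pinpoints the vanishing of $A_\gamma$ as the sole mechanism doing the work.
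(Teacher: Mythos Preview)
Your overall architecture is the same as the paper's: the theorem reduces to the identity
\[
(\gamma\cdot\delta)\,U \;=\; A_{\delta}\circ A_{\gamma}\qquad\text{on }HF^+(Y,[G],g-2;\mathbb Q),
\]
after which linear dependence of the images of $\gamma,\delta$ in $H_1(Y;\mathbb Q)$ kills the right-hand side (your reduction to a rationally null-homologous $\gamma$ is an equivalent rephrasing of that step). So the strategy is right, and your consistency check against $G\times S^1$ is apt.

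The genuine gap is your proof of the displayed identity. You propose to ``work in a Heegaard diagram subordinate to $G$'' and argue that $A_{\gamma}\circ A_{\delta}$ and $U$ ``enumerate the same rigid configurations,'' but you give no mechanism for why two Maslov-index-one disks weighted by $\gamma$ and $\delta$ should concatenate into exactly the index-two configurations counted by $U$, nor how the intersection number $\gamma\cdot\delta$ would emerge from such a count. You yourself flag this as ``the main obstacle,'' and nothing in the sketch resolves it; as written it is an assertion, not an argument.

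The paper establishes the identity by a completely different and concrete route that avoids any direct disk analysis in $Y$. One factors the product cobordism $Y\times[0,1]$ as $W_2\circ W_1$, where $W_1\co Y\to Y\#(G\times S^1)$ is a neighborhood of $Y\cup(\text{arc})\cup G$, so that $\bFle_{W_2}\circ\bFle_{W_1}=\mathrm{id}$. A K\"unneth argument (Lemma~\ref{lem:Fx}) gives $\bFle_{W_1}(x)=x\otimes y$ for a specific element $y\in\bhfle(G\times S^1,\mathfrak s_{g-2})$, and the explicit Ozsv\'ath--Szab\'o computation $HF^+(G\times S^1,\mathfrak s_{g-2})\cong X(g,1)$ lets one show $A_{\gamma_2}A_{\gamma_1}(y)=(\gamma_1\cdot\gamma_2)\,Uy$ by hand (Proposition~\ref{prop:yValue} and Lemma~\ref{lem:Image1}). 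Naturality of the $H_1$--action under cobordism maps (Theorem~\ref{thm:Cob}) then transports this relation from the $G\times S^1$ summand back to $Y$, yielding $(\gamma_1\cdot\gamma_2)\,Ux=A_{\gamma_2''}A_{\gamma_1''}(x)$ for every $x$. In short, the identity is \emph{imported} from the model $G\times S^1$, where it is a calculation, rather than proved intrinsically in $Y$; your proposal is missing exactly this import step or any substitute for it.
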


\begin{rem}
When $b_1(Y)\le 2$, a simple intersection number argument shows that the image of $H_1(G;\mathbb Q)\to H_1(Y;\mathbb Q)$ is at most $1$--dimensional for any $G\subset Y$ with $[G]\ne0\in H_2(Y)$. So Theorem~\ref{thm:ClosedSecond} can be applied to this case. Ozsv\'ath and Szab\'o have computed $HF^+(S^3_0(K))$ in the cases when $K$ is an L-space knot \cite[Proposition~8.1]{OSzAbGr} and when $K$ is an alternating knot \cite[Theorem~1.4]{OSzAlternating}. One can directly check Theorem~\ref{thm:ClosedSecond} in these two cases.
\end{rem}

\begin{rem}
If $G\subset Y$ is a closed oriented surface of genus $g>1$, the map $U$ on $HF^+(Y,[G],g-1)$ is trivial. The author first learned this result from Peter Ozsv\'ath, and learned a sketch of a proof of it from Yank{\i} Lekili using a similar argument as in \cite[Theorem~3.1]{OSzSympl}. A proof of a more general result using the same idea as Lekili's was given by Wu \cite{Wu}. The proof of Theorem~\ref{thm:ClosedSecond} uses the same argument. Our proof justifies the use of the K\"unneth formula for $HF^+$ in \cite{Wu}. 
\end{rem}

This paper is organized as follows. In Section~\ref{sect:Prelim}, we will collect some results about Heegaard Floer homology we will use. In Section~\ref{sect:HF+}, we prove Theorem~\ref{thm:ClosedSecond}. In Section~\ref{sect:Main}, we prove Theorem~\ref{thm:SecondTerm}. 

We will use the following notations in this paper. If $N$ is a submanifold of another manifold $M$, let $\nu(N)$ be a closed tubular neighborhood of $N$ in $M$, and let $\nu^{\circ}(N)$ be the interior of $\nu(N)$. If $K$ is a null-homologous knot in a $3$--manifold $Z$, let $Z_{p/q}(K)$ be the manifold obtained by $\frac pq$--surgery on $K$.

\vspace{5pt}\noindent{\bf Acknowledgements.}\quad  The author was
partially supported by NSF grant number DMS-1811900. The author is indebted to Robert Lipshitz for many fruitful discussions and critical comments which shaped this work.


\section{Preliminaries on Heegaard Floer homology}\label{sect:Prelim}

Heegaard Floer homology \cite{OSzAnn1}, in its most fundamental form, assigns a package of invariants
$$\widehat{HF}, HF^+, HF^-, HF^{\infty}$$
to a closed, connected, oriented $3$--manifold $Y$ equipped with a Spin$^c$ structure $\mathfrak s\in\spinc(Y)$.

As in \cite{MO}, let $\bhfminus$ and $\mathbf{HF}^{\infty}$ denote the completions of $HF^-$ and $HF^{\infty}$ with respect to the maximal ideal
$(U)$ in the ring $\mathbb Z[U]$. By \cite[Equations~(5) and~(6)]{MO}, when $c_1(\mathfrak s)$ is non-torsion, $\mathbf{HF}^{\infty}(Y,\mathfrak s)=0$, so 
\begin{equation}\label{eq:+-Isom}
{HF}^+(Y,\mathfrak s)\cong \bhfminus(Y,\mathfrak s).
\end{equation}

Let $CF^{\le0}(Y,\mathfrak s)$ be the subcomplex of $CF^{\infty}(Y,\mathfrak s)$ which consists of $[\mathbf x,i]$, $i\le0$. This chain complex is clearly isomorphic to
$CF^{-}(Y,\mathfrak s)$ via the $U$--action. We have a similar completion $\bhfle$.

We often use $HF^{\circ}$ to denote one of the above invariants.

When $W$ is a cobordism from $Y_1$ to $Y_2$, and $\mathfrak S\in\spinc(W)$, there is an induced homomorphism
\[
F^{\circ}_{W,\mathfrak S}\co HF^{\circ}(Y_1,\mathfrak S|_{Y_1})\to HF^{\circ}(Y_2,\mathfrak S|_{Y_2}).
\]

Given $\gamma\in H_1(Y)/\mathrm{Tors}$, one can define a homomorphism \[A_{\gamma}\co HF^{\circ}(Y)\to HF^{\circ}(Y)\] satisfying $A_{\gamma}^2=0$.
The following theorem is the $\bhfle$ version of \cite[Theorem~3.6]{HeddenNi}. See the paragraph after it.

\begin{thm}\label{thm:Cob}
Suppose $Y_1,Y_2$ are two closed, oriented, connected
$3$--manifolds, and $W$ is a cobordism from $Y_1$ to $Y_2$. Let
$$\bFle_{W}\co \bhfle(Y_1)\longrightarrow
\bhfle(Y_2)$$ be the homomorphism induced by
$W$. Suppose $\zeta_1\subset Y_1$, $\zeta_2\subset Y_2$ are two
closed curves which are homologous in $W$. Then 
$$\bFle_{W}\circ A_{[\zeta_1]}= A_{[\zeta_2]}\circ \bFle_{W}.$$
\end{thm}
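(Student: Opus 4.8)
The plan is to reduce the statement to the proof of \cite[Theorem~3.6]{HeddenNi} by observing that the entire argument there takes place at the chain level over $\mathbb Z[U]$ and is compatible with $(U)$--adic completion. First I would set up the chain-level picture: the operator $A_{[\zeta]}$ is defined on $CF^{\le0}$ by choosing a representative of $[\zeta]$ disjoint from the attaching circles and counting holomorphic disks weighted by their intersection with the associated codimension-two subvariety, while the cobordism map $F^{\le0}_{W}$ is assembled, after fixing a handle decomposition of $W$, from the $1$--, $2$--, and $3$--handle maps. The key point for what follows is that both are $\mathbb Z[U]$--equivariant chain maps, well defined up to $\mathbb Z[U]$--equivariant chain homotopy, and that $CF^{\le0}(Y_i)$ is finitely generated and free over $\mathbb Z[U]$ (it is isomorphic to $CF^-(Y_i)$ via the $U$--action, as noted above).

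Next I would import the geometric heart of \cite[Theorem~3.6]{HeddenNi}, together with the extension to the various flavors described in the paragraph following it. Because $\zeta_1$ and $\zeta_2$ are homologous in $W$, the two ways of inserting the $H_1$--action — acting by $A_{[\zeta_1]}$ on the incoming end and by $A_{[\zeta_2]}$ on the outgoing end — differ by the boundary of a one-parameter family of disks and triangles in which the marked curve is dragged across the cobordism. This produces a $\mathbb Z[U]$--equivariant chain homotopy $H$ satisfying
\[
\partial H + H\partial = F^{\le0}_{W}\circ A_{[\zeta_1]} - A_{[\zeta_2]}\circ F^{\le0}_{W}
\]
on the uncompleted complexes, the sole topological input being the equality $[\zeta_1]=[\zeta_2]$ in $H_1(W)$, which is exactly the hypothesis.

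Finally I would pass to the completion. Since $A_{[\zeta_i]}$, $F^{\le0}_{W}$, and $H$ are all $\mathbb Z[U]$--module maps, they are $(U)$--adically continuous and extend uniquely to the completed complexes $\mathbf{CF}^{\le0}(Y_i)$, and the extended homotopy witnesses the same identity there. Because $\mathbb Z[U]$ is Noetherian, its $(U)$--adic completion is flat, so completion is exact on the finitely generated complexes $CF^{\le0}(Y_i)$ and commutes with homology; thus $H_*(\mathbf{CF}^{\le0}(Y_i))=\bhfle(Y_i)$, and $\bFle_{W}$ is precisely the map induced by the completed chain map. Taking homology of the completed identity then yields $\bFle_{W}\circ A_{[\zeta_1]}=A_{[\zeta_2]}\circ\bFle_{W}$.

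The step I expect to be the main obstacle is the bookkeeping in the second paragraph: extracting from Hedden--Ni's disk-counting argument a genuinely $\mathbb Z[U]$--equivariant chain homotopy for the $HF^{\le0}$--flavor (rather than $\widehat{HF}$), and verifying that sliding the marked curve across the handle maps introduces no $U$--power corrections that would spoil equivariance. Once such an equivariant chain homotopy is in hand, the completion step is entirely formal.
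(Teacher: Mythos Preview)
Your proposal is correct and follows essentially the same approach as the paper: the paper does not give an independent proof of this theorem but simply states that it is the $\bhfle$ version of \cite[Theorem~3.6]{HeddenNi} and refers to the paragraph following that theorem for the extension to the relevant flavor. Your outline makes explicit the completion step (chain-level $\mathbb Z[U]$--equivariance, flatness of the $(U)$--adic completion) that the paper leaves to the reference, but the underlying argument is the same.
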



\section{The next-to-top term in $HF^+$}\label{sect:HF+}

We will use $\mathbb Q$--coefficients for Heegaard Floer homology in the rest of this paper. 

Let $G$ be a closed oriented surface of genus $g>2$.
Let \[V\co S^3\to G\times S^1\] be the cobordism which consists of $2g$ one-handles and $1$ two-handle with attaching curve being the Borromean knot $B_g$. Let $\mathfrak S_{g-2}\in\spinc(V)$ be the Spin$^c$ structure with $\langle c_1(\mathfrak S_{g-2}),[G]\rangle=2g-4$, and let $\mathfrak s_{g-2}\in\spinc(G\times S^1)$ be the restriction of $\mathfrak S_{g-2}$ to $G\times S^1$.

Let \[\bFle_{V,\mathfrak S_{g-2}}\co \bhfle(S^3)\to \bhfle(G\times S^1,\mathfrak s_{g-2})\] be the map induced by the cobordism $(V,\mathfrak S_{g-2})$, and let 
\begin{equation}\label{eq:y}
y=\bFle_{V,\mathfrak S_{g-2}}(\mathbf 1).
\end{equation}

In \cite[Theorem~9.3]{OSzKnot}, it is shown that
\begin{equation}\label{eq:HF+Prod}
HF^{+}(G\times S^1,\mathfrak s_{g-2})\cong X(g,1)=H^0(G)\otimes\mathbb Q[U]/(U^2)\oplus H^1(G)\otimes \mathbb Q[U]/(U),
\end{equation}
with the homological action given by
\begin{equation}\label{eq:AProd}
A_{\gamma}(\theta\otimes1)=\mathrm{PD}(\gamma)\otimes\mathbf 1, \quad A_{\gamma}(\eta\otimes1)=\langle\eta,\gamma\rangle\otimes U.
\end{equation}
Here $\theta$ is a generator of $H^0(G)$, and $\eta\in H^1(G)$.
We will fix an identification as in (\ref{eq:HF+Prod}). By abuse of notation, we often use $\theta$ to denote $\theta\otimes1\in X(g,1)$.

We will prove the following proposition.

\begin{prop}\label{prop:yValue}
The element $y$ defined in (\ref{eq:y}) has the form $a\theta+bU\theta$ for some $a,b\in\mathbb Q$, $a\ne0$.
\end{prop}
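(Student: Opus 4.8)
The plan is to realize $y$ as a genuine element of $X(g,1)$ and to control its two possible coordinates separately. Since $\langle c_1(\mathfrak s_{g-2}),[G]\rangle=2g-4\ne0$, the class $c_1(\mathfrak s_{g-2})$ is non-torsion, so $\mathbf{HF}^{\infty}(G\times S^1,\mathfrak s_{g-2})=0$ and, combining (\ref{eq:+-Isom}) with the isomorphism $\bhfle\cong\bhfminus$, the target is identified with $HF^+(G\times S^1,\mathfrak s_{g-2})\cong X(g,1)$, a finite-dimensional space. Cobordism maps carry a well-defined $\mathbb Z/2\mathbb Z$--degree and $\mathbf 1$ is homogeneous, so $y$ is a $\mathbb Z/2\mathbb Z$--homogeneous element of this finite-dimensional space. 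It therefore suffices to prove (i) that the $H^1(G)$--component of $y$ vanishes, so that $y=a\theta+bU\theta$, and (ii) that $a\ne0$.

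For (i) I would argue by parity. By (\ref{eq:AProd}) the action $A_\gamma$ lowers the grading by one and sends $\theta$ into $H^1(G)$ and $H^1(G)$ into $\langle U\theta\rangle$; hence $\theta$ and $U\theta$ occupy one $\mathbb Z/2\mathbb Z$--grading while $H^1(G)$ occupies the other. I would then compute the $\mathbb Z/2\mathbb Z$--degree shift of $\bFle_{V,\mathfrak S_{g-2}}$ directly from the handle decomposition of $V$ (whose $2g$ one-handles and single two-handle make $\chi(V)$, $\sigma(V)$, and $c_1(\mathfrak S_{g-2})^2$ explicit) and check that $y$ lands in the grading of $\theta$ and $U\theta$. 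Since $H^1(G)$ sits in the opposite grading, its contribution to $y$ vanishes and $y=a\theta+bU\theta$.

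For (ii), note that in $X(g,1)$ one has $\im U=\langle U\theta\rangle$, so $a\ne0$ is equivalent to $y\notin\im U$. To test $U$--divisibility I would reduce to $\widehat{HF}$: the long exact sequence associated to $0\to\widehat{CF}\to CF^-\xrightarrow{U}CF^-\to0$ furnishes a natural map $\rho$ with $\ker\rho=\im U$, and naturality of the cobordism maps gives $\rho(y)=\widehat F_{V,\mathfrak S_{g-2}}(\widehat{\mathbf 1})$, where $\widehat{\mathbf 1}\ne0$ generates $\widehat{HF}(S^3)$. Thus $a\ne0$ reduces to the nonvanishing of this hat-level map, which I expect to be the main obstacle. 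I would handle it by factoring $V$ as the one-handle cobordism $V_1\co S^3\to\#^{2g}(S^2\times S^1)$ followed by the two-handle cobordism $V_2$ along the Borromean knot $B_g$: the map $\widehat F_{V_1}$ carries $\widehat{\mathbf 1}$ to the top generator, and the nonvanishing of $\widehat F_{V_2,\mathfrak S_{g-2}}$ on that generator would follow from the explicit description of the knot Floer homology of $B_g$ and its surgery maps in \cite{OSzKnot} (or, alternatively, by comparison with the top Spin$^c$ structure $\mathfrak S_{g-1}$, where the corresponding cobordism map is nonzero by the genus-detection result of \cite{OSzGenus}).
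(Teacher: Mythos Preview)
Your approach is genuinely different from the paper's and largely sound, but it is more laborious and the key step in (ii) is not fully justified.

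The paper proceeds as follows. It first establishes Lemma~\ref{lem:Fx}: for any $x\in\bhfle(Y,\mathfrak t)$ one has $\bFle_{W_1,\mathfrak T|_{W_1}}(x)=x\otimes y$. Taking $Y=G\times S^1$ and $x=U\theta$, equation~(\ref{eq:CompId}) gives
\[
U\theta=\bFle_{W_2}(U\theta\otimes y)=\bFle_{W_2}(\theta\otimes Uy),
\]
so $Uy\ne0$. In $X(g,1)$ the only elements not annihilated by $U$ are those with nonzero $\theta$--coefficient, hence $a\ne0$; since $y$ is $\mathbb Z/2\mathbb Z$--homogeneous and $\theta$, $H^1(G)$ lie in opposite gradings, the $H^1(G)$--component of $y$ must vanish. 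Thus a single fact, $Uy\ne0$, simultaneously handles your (i) and (ii), and it is obtained for free from machinery (Lemma~\ref{lem:Fx} and (\ref{eq:CompId})) that the paper needs anyway for Theorem~\ref{thm:ClosedSecond}.

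By contrast, your route separates (i) and (ii) and attacks each by direct computation. Your parity argument for (i) is fine in principle, though you do not actually carry out the grading computation (and pinning down the absolute $\mathbb Z/2\mathbb Z$--grading of $\theta$ in $X(g,1)$ is a small extra task). The real issue is (ii): the reduction to $\widehat{HF}$ is correct (your short exact sequence is written backwards---$\widehat{CF}$ is the cokernel of $U$ on $CF^-$, not the kernel---but the conclusion $\ker\rho=\im U$ survives), yet the nonvanishing of $\widehat F_{V_2,\mathfrak S_{g-2}}$ on the top generator is asserted rather than proved. The computation of $HF^+(G\times S^1,\mathfrak s_{g-2})$ in \cite{OSzKnot} proceeds via the knot filtration for $B_g$, and one can in principle extract the cobordism map from that, but it is not a statement you can simply cite; your alternative suggestion of comparing with $\mathfrak S_{g-1}$ does not help, since nonvanishing in the top Spin$^c$ structure says nothing about the next one. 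So your plan is viable but leaves the main obstacle unaddressed, whereas the paper's indirect argument sidesteps it entirely.
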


Let $Y$ be a closed, oriented $3$--manifold and suppose that $G$ embeds into $Y$ as a homologically essential surface.
Consider the trivial cobordism
\[
Y\times[0,1]\co Y\to Y.
\]
Let $\mathsf p$ be a point in $G$, and let $W_1$ be a tubular neighborhood of \[(Y\times\{0\})\cup(\mathsf p\times[0,\frac12])\cup (G\times\{\frac12\}).\] Then $W_1$ is a cobordism from $Y$ to $Y\#(G\times S^1)$. Let $W_2=\overline{Y\times[0,1]\setminus W_1}$. 

Let $\mathfrak t\in\spinc(Y)$ be a Spin$^c$ structure satisfying $\langle c_1(\mathfrak t),[G]\rangle=2(g-2)$, and let $\mathfrak T\in\spinc(Y\times[0,1])$ be the corresponding Spin$^c$ structure. If we think of $G\times S^1$ as the boundary of a regular neighborhood of $G\times\{\frac12\}$, then we clearly have $\mathfrak T|_{G\times S^1}=\mathfrak s_{g-2}$.
By \cite[Lemma~2.1]{NiPropG},
\begin{equation}\label{eq:CompId}
F^{\circ}_{W_2,\mathfrak T|_{W_2}}\circ F^{\circ}_{W_1,\mathfrak T|_{W_1}}=\mathrm{id}\co HF^{\circ}(Y,\mathfrak t)\to HF^{\circ}(Y,\mathfrak t).
\end{equation}

\begin{lem}\label{lem:Fx}
Suppose that $x\in \bhfle(Y,\mathfrak t)$, then $\bFle_{W_1,\mathfrak T|_{W_1}}(x)=x\otimes y$. Here $y$ is defined in (\ref{eq:y}), and 
\[
x\otimes y\in  \bhfle(Y,\mathfrak t)\otimes_{\mathbb Q[U]}\bhfle(G\times S^1,\mathfrak s_{g-2})\subset \bhfle(Y\#(G\times S^1),\mathfrak t\#\mathfrak s_{g-2})
\]
by the K\"unneth formula.
\end{lem}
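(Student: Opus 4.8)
The plan is to recognize $W_1$ as a boundary connected sum of the trivial cobordism with the Borromean cobordism $V$, and then to invoke the connected sum formula for cobordism maps.

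First I would analyze the topology of $W_1$. Since $G$ is an orientable two-sided surface in $Y$, its normal bundle in $Y$ is trivial; together with the trivial $[0,1]$--direction, the normal bundle of $G\times\{\frac12\}$ in $Y\times[0,1]$ is trivial, so $\nu(G\times\{\frac12\})\cong G\times D^2$ with boundary $G\times S^1$. Capping off the $S^3$--end of $V$ by a ball produces $B^4\cup_{S^3}V\cong G\times D^2$; this is precisely the Borromean--surgery description of $G\times S^1$. Consequently $W_1$ is obtained from the collar $Y\times[0,\epsilon]$ by attaching a copy of $V$ inside a ball, the $1$--handle tube around the arc $\mathsf p\times[0,\frac12]$ playing the role of the connect-sum tube. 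This exhibits an identification $W_1\cong (Y\times I)\,\natural\, V$ of cobordisms from $Y=Y\# S^3$ to $Y\#(G\times S^1)$.

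Next I would check that the $\spinc$ structures match under this identification: $\mathfrak T|_{W_1}$ restricts to the pullback of $\mathfrak t$ on the $Y\times I$ factor and to $\mathfrak S_{g-2}$ on the $V$ factor. The latter is forced by the hypothesis $\langle c_1(\mathfrak t),[G]\rangle=2(g-2)$, which gives $\langle c_1(\mathfrak T|_V),[G]\rangle=2g-4$, the defining property of $\mathfrak S_{g-2}$. This is the same $\spinc$ matching already used in the text to identify $\mathfrak T|_{G\times S^1}=\mathfrak s_{g-2}$. With these identifications in place, I would apply the connected sum formula for cobordism maps in the completed theory, $\bFle_{(Y\times I)\,\natural\, V}=\bFle_{Y\times I}\otimes \bFle_{V,\mathfrak S_{g-2}}$ under the K\"unneth isomorphism. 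Since $\bFle_{Y\times I}=\mathrm{id}$ and the $S^3$--factor of the input is $\mathbf 1$, applying this to $x=x\otimes\mathbf 1$ gives
\[
\bFle_{W_1,\mathfrak T|_{W_1}}(x)=x\otimes \bFle_{V,\mathfrak S_{g-2}}(\mathbf 1)=x\otimes y,
\]
as claimed.

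The main obstacle will be justifying the two geometric and analytic inputs rigorously. Geometrically, the crux is the trivialization $\nu(G\times\{\frac12\})\cong G\times D^2$ and the resulting identification $B^4\cup V\cong G\times D^2$, which makes the capped neighborhood genuinely the Borromean cobordism rather than some twisted disk bundle; the orientability and two-sidedness of $G$ in $Y$ are exactly what guarantee the trivial framing. Analytically, one must establish the connected sum formula $\bFle_{W\,\natural\, W'}=\bFle_{W}\otimes \bFle_{W'}$ at the level of maps in $\bhfle$, extending the $HF^+$ version of the formula to the $(U)$--completion; once the K\"unneth isomorphism on the objects (already invoked in the statement) is in place, the multiplicativity of the cobordism maps under boundary connected sum follows from the standard neck-stretching argument.
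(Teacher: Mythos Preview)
Your proposal is correct and follows essentially the same approach as the paper. The paper's proof simply cites \cite[Proposition~4.4]{OSzAbGr} to obtain the commutative diagram expressing $\bFle_{W_1,\mathfrak T|_{W_1}}$ as $\mathrm{id}\otimes\bFle_{V,\mathfrak S_{g-2}}$ under the K\"unneth maps, whereas you unpack the topological identification $W_1\cong(Y\times I)\,\natural\, V$ and the matching of $\spinc$ structures more explicitly before invoking the same boundary-connected-sum formula for cobordism maps.
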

\begin{proof}
By \cite[Proposition~4.4]{OSzAbGr}, there is a commutative diagram (note that we switch the order of the tensor product)
\[
\xymatrixcolsep{6pc}
\xymatrix{ \bhfle(Y,\mathfrak t)\otimes \bhfle(S^3)\ar[r]^-{\bFle_{Y\#S^3,\mathfrak t}}
\ar[d]^-{\mathrm{id}\otimes \bFle_{V,\mathfrak S_{g-2}}}& \bhfle(Y,\mathfrak t)\ar[d]^-{\bFle_{W_1,\mathfrak T|_{W_1}}}\\
 \bhfle(Y,\mathfrak t)\otimes \bhfle(G\times S^1,\mathfrak s_{g-2}) \ar[r]^-{\bFle_{Y\#(G\times S^1),\mathfrak t\#\mathfrak s_{g-2}}} & \bhfle(Y\#(G\times S^1),\mathfrak t\#\mathfrak s_{g-2}).
}
\]
Our conclusion follows from this commutative diagram.
\end{proof}

\begin{proof}[Proof of Proposition~\ref{prop:yValue}]
We choose $Y=G\times S^1$ and $x=U\theta$. By (\ref{eq:CompId}) and  Lemma~\ref{lem:Fx},
\[
U\theta=\bFle_{W_2}\circ \bFle_{W_1}(U\theta)=\bFle_{W_2}(U\theta\otimes y)=\bFle_{W_2}(\theta\otimes Uy).
\]
Since $U\theta\ne0$, $Uy\ne0$. From the structure of $X(g,1)$ in (\ref{eq:HF+Prod}), we see that any homogeneous element $y$ (with respect to the $\mathbb Z/2\mathbb Z$--grading) satisfying $Uy\ne0$ must be of the form $a\theta+bU\theta$, $a\ne0$.
\end{proof}

\begin{lem}\label{lem:Image1}
For any $\gamma_1,\gamma_2\in H_1(G)\subset H_1(G\times S^1)$,
we have \[A_{\gamma_2}\circ A_{\gamma_1}(y)=(\gamma_1\cdot\gamma_2)Uy.\]
\end{lem}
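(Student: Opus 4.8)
The plan is to reduce the statement to a direct computation inside $X(g,1)$, using the explicit form of $y$ supplied by Proposition~\ref{prop:yValue} together with the action formulas (\ref{eq:AProd}). Since $\langle c_1(\mathfrak s_{g-2}),[G]\rangle=2g-4\ne0$, the class $c_1(\mathfrak s_{g-2})$ is non-torsion, so (\ref{eq:+-Isom}) together with the $U$--action identification of $\bhfle$ with $\bhfminus$ lets me carry out the calculation in $HF^+(G\times S^1,\mathfrak s_{g-2})\cong X(g,1)$. Writing $y=a\theta+bU\theta$ with $a\ne0$, my first step would be to apply $A_{\gamma_1}$. The key point is that $A_{\gamma_1}$ is $\mathbb Q[U]$--linear, so $A_{\gamma_1}(U\theta)=U\,A_{\gamma_1}(\theta)=U\,(\mathrm{PD}(\gamma_1)\otimes\mathbf 1)$; but $\mathrm{PD}(\gamma_1)\otimes\mathbf 1$ lies in the summand $H^1(G)\otimes\mathbb Q[U]/(U)$, which is annihilated by $U$. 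Hence the $bU\theta$ term drops out, and $A_{\gamma_1}(y)=a\,\mathrm{PD}(\gamma_1)\otimes\mathbf 1$.

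Next I would apply $A_{\gamma_2}$ to this $H^1(G)$--class. By the second formula in (\ref{eq:AProd}), $A_{\gamma_2}(\mathrm{PD}(\gamma_1)\otimes\mathbf 1)=\langle\mathrm{PD}(\gamma_1),\gamma_2\rangle\,U\theta$, so that $A_{\gamma_2}\circ A_{\gamma_1}(y)=a\,\langle\mathrm{PD}(\gamma_1),\gamma_2\rangle\,U\theta$. The one genuinely geometric input is the Poincar\'e duality identity on the closed oriented surface $G$, namely $\langle\mathrm{PD}(\gamma_1),\gamma_2\rangle=\gamma_1\cdot\gamma_2$; with this the left-hand side becomes $a\,(\gamma_1\cdot\gamma_2)\,U\theta$.

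Finally I would compute the right-hand side. From $y=a\theta+bU\theta$ and $U^2\theta=0$ (the summand $H^0(G)\otimes\mathbb Q[U]/(U^2)$ is killed by $U^2$), I get $Uy=aU\theta$, whence $(\gamma_1\cdot\gamma_2)Uy=a\,(\gamma_1\cdot\gamma_2)\,U\theta$, matching the expression just obtained. I do not anticipate a serious obstacle: the lemma is essentially bookkeeping in $X(g,1)$. The only points requiring care are the $\mathbb Q[U]$--linearity of $A_\gamma$, which forces the second term of $y$ to vanish after the first application, and fixing the orientation/sign convention so that the evaluation $\langle\mathrm{PD}(\gamma_1),\gamma_2\rangle$ is genuinely the intersection number $\gamma_1\cdot\gamma_2$ rather than its negative.
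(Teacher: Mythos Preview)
Your proof is correct and follows exactly the same route as the paper's: write $y=a\theta+bU\theta$ via Proposition~\ref{prop:yValue}, compute $A_{\gamma_2}\circ A_{\gamma_1}(y)=a\langle\mathrm{PD}(\gamma_1),\gamma_2\rangle U\theta=(\gamma_1\cdot\gamma_2)aU\theta$ using (\ref{eq:AProd}), and match this with $(\gamma_1\cdot\gamma_2)Uy=(\gamma_1\cdot\gamma_2)aU\theta$. You simply spell out a few steps (the $\mathbb Q[U]$--linearity killing the $bU\theta$ contribution, and the Poincar\'e duality identity) that the paper leaves implicit.
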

\begin{proof}
By Proposition~\ref{prop:yValue}, $y=a\theta+bU\theta$. By the module structure of $X(g,1)$ in (\ref{eq:HF+Prod}) and (\ref{eq:AProd}), $Uy=aU\theta$, and 
\[A_{\gamma_2}\circ A_{\gamma_1}(y)=\langle\mathrm{PD}(\gamma_1),\gamma_2\rangle aU\theta=(\gamma_1\cdot\gamma_2)aU\theta.\qedhere\]
\end{proof}

\begin{proof}[Proof of Theorem~\ref{thm:ClosedSecond}]
Let $\mathfrak t\in\spinc(Y)$ be as above.
Assume that $U\ne0$ on $HF^+(Y,\mathfrak t)$. By (\ref{eq:+-Isom}), $Ux\ne0$ for some $x\in \bhfle(Y,\mathfrak t)$.
By (\ref{eq:CompId}) and  Lemma~\ref{lem:Fx},
\begin{equation}\label{eq:xIm}
x=\bFle_{W_2}\circ \bFle_{W_1}(x)=\bFle_{W_2}(x\otimes y).
\end{equation}
Let $c_i\subset G$ be a closed curve representing $\gamma_i$, $i=1,2$. Let $\gamma_i'\in H_1(Y\#(G\times S^1))$ be represented by $c_i\times\text{point}\subset G\times S^1$, and let $\gamma_i''\in H_1(Y)$ be represented by $c_i\subset G\subset Y$. 
Then $(c_i\times[\frac12,1])\cap W_2$ defines a homology between $\gamma_i'$ and $\gamma_i''$.
By Lemma~\ref{lem:Image1} and (\ref{eq:xIm}) we have
\begin{eqnarray*}
(\gamma_1\cdot\gamma_2)Ux&=&
\bFle_{W_2}(x\otimes (\gamma_1\cdot\gamma_2)Uy)\\
&=&\bFle_{W_2}(x\otimes A_{\gamma_2}\circ A_{\gamma_1}(y))\\
&=&\bFle_{W_2}(A_{\gamma_2'}\circ A_{\gamma_1'}(x\otimes y)),
\end{eqnarray*}
where the last equality follows from the fact that the actions of $A_{\gamma_1'}$ and $A_{\gamma_2'}$ on the $\bhfle(Y,\mathfrak t)$ factor are trivial.

Since $\gamma_1''$ and $\gamma_2''$ in $H_1(Y)$ are linearly dependent, we get
\[\bFle_{W_2}(A_{\gamma_2'}\circ A_{\gamma_1'}(x\otimes y))=A_{\gamma_2''}\circ A_{\gamma_1''}\bFle_{W_2}(x\otimes y)=0\] by 
Theorem~\ref{thm:Cob} and the fact that $A_{\gamma}^2=0$ for any $\gamma\in H_1(Y)$. This contradicts the assumption that $\gamma_1\cdot\gamma_2\ne0$ and $Ux\ne0$.
\end{proof}


\section{Proof of the main theorem}\label{sect:Main}

Let $K$ be a null-homologous knot in a generalized L-space $Z$. Let $F$ be a Thurston norm minimizing Seifert surface of $K$ with genus $g>2$. 
Let $C=CFK^{\infty}(Z,K,[F])$.
Let 
\[A_k^+=C\{i\ge0 \text{ or } j\ge k\}, B^+=C\{i\ge0 \}\]
and define maps 
\[
v_k^+, h_k^+\co A_k^+\to B^+
\]
as in \cite{OSzIntSurg}. By \cite[Theorem~7.1]{OSzFour}, the difference between the grading shifts of $v_{k}^+$ and $h_{k}^+$ is
\begin{equation}\label{eq:vhDiff}
-\frac{(2k-n)^2-(2k+n)^2}{4n}=2k.
\end{equation}

\begin{prop}\label{prop:ZeroSurg}
Let  $\widehat F$ be the closed surface in $Z_0(K)$ obtained by capping off $\partial F$ with a disk.
If there exists an element $a\in H_*(C\{i<0, j\ge g-2\})$ such that $Ua\ne0$, then there also exists an element $a'\in HF^+(Z_0(K),[\widehat{F}],g-2)$ such that $Ua'\ne0$.
\end{prop}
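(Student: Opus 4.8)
The plan is to relate the large-surgery complex $A_{g-2}^+$ to the zero-surgery group $HF^+(Z_0(K),[\widehat F],g-2)$ via the surgery exact sequence, transporting the $U$-nontriviality witnessed by $a\in H_*(C\{i<0,\,j\ge g-2\})$ into a $U$-nontrivial class in the zero-surgery. The first step is to understand $H_*(C\{i<0,\,j\ge g-2\})$ in relation to the standard subquotient complexes. Observe that $C\{i<0,\,j\ge g-2\}$ fits into a short exact sequence of complexes together with $A_{g-2}^+=C\{i\ge 0\text{ or }j\ge g-2\}$ and $B^+=C\{i\ge 0\}$: indeed, $C\{i<0,\,j\ge g-2\}$ is precisely the kernel of the projection $A_{g-2}^+\to B^+$ (this projection is the map $v_{g-2}^+$ up to the relevant identifications), so on homology we obtain a long exact sequence relating $H_*(C\{i<0,\,j\ge g-2\})$, $H_*(A_{g-2}^+)$, and $H_*(B^+)=HF^+(Z)$. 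Since $Z$ is a generalized L-space, $HF^+(Z)$ is $U$-standard (its reduced part vanishes), so $U$ is eventually injective there; I would use this to conclude that a class $a$ with $Ua\ne 0$ in the kernel must map to, or originate from, a $U$-nontrivial class in $H_*(A_{g-2}^+)$.

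The second and central step is to pass from $H_*(A_{g-2}^+)$ to $HF^+(Z_0(K),[\widehat F],g-2)$. Here I would invoke the integer surgery formula of Ozsv\'ath--Szab\'o, which identifies the zero-surgery Floer homology in the Spin$^c$ structure with $\langle c_1,[\widehat F]\rangle=2(g-2)$ in terms of the mapping cone of the maps $v_k^+$ and $h_k^+$; in the relevant grading range the group $HF^+(Z_0(K),[\widehat F],g-2)$ receives $A_{g-2}^+$ via a map that is compatible with the $U$-action. The grading-shift computation (\ref{eq:vhDiff}), giving the difference $2k$ between $v_k^+$ and $h_k^+$, is exactly what I expect to need to pin down which summand of the mapping cone carries the class and to verify that the $U$-action is preserved (rather than annihilated) under the identification. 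Concretely, I would produce $a'$ as the image of $a$ (or of the $A_{g-2}^+$-class it determines) under the natural map into the mapping cone, and then check $Ua'\ne 0$ by naturality of $U$.

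The main obstacle I anticipate is ensuring that $U$-nontriviality survives the passage through these maps: the projection $A_{g-2}^+\to B^+$ and the inclusion of the mapping cone both have kernels and cokernels, and it is a priori possible that a $U$-nontrivial class in one group becomes $U$-divisible or lands in the image/kernel in a way that kills the $U$-action. Overcoming this requires carefully tracking the interplay between the generalized L-space hypothesis (which forces $HF^+(Z)$, hence $B^+$, to have no reduced part and thus no ``extra'' $U$-torsion) and the grading bookkeeping from (\ref{eq:vhDiff}). The strategy to handle this is to argue at the level of the $U$-tower: since the only obstruction to lifting $U$-nontriviality comes from the $U$-torsion in $H_*(B^+)$, and that torsion is controlled by $HF_{\mathrm{red}}(Z)=0$, any $U$-nontrivial class in $C\{i<0,\,j\ge g-2\}$ must be detected in the mapping cone, yielding the desired $a'$. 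I would finish by confirming the Spin$^c$/grading labels match the stated decoration $[\widehat F],\,g-2$.
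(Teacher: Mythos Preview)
Your framework is the same as the paper's: the short exact sequence with $v_{g-2}^+$ identifies $H_*(C\{i<0,\,j\ge g-2\})$ with $\ker v\subset H_*(A_{g-2}^+)$ (using surjectivity of $v$, which follows from the generalized L-space hypothesis), and the integer surgery formula identifies $HF^+(Z_0(K),[\widehat F],g-2)$ with $\ker(v+h)\subset H_*(A_{g-2}^+)$ (again using surjectivity). So far so good.

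The gap is in your ``second and central step.'' You speak of producing $a'$ as the image of $a$ under ``the natural map into the mapping cone,'' but there is no such map in the direction you need. In the mapping cone of $v+h\co A_{g-2}^+\to B^+$, the natural chain maps are $B^+\hookrightarrow\mathrm{Cone}$ and $\mathrm{Cone}\twoheadrightarrow A_{g-2}^+$; on homology the latter identifies $H_*(\mathrm{Cone})$ with $\ker(v+h)$, not with $\ker v$. An element $a\in\ker v$ need not lie in $\ker(v+h)$: in general $h(a)\ne 0$, and nothing in your $U$-tower heuristic forces it to vanish. Your discussion of the obstacle correctly senses that something must be done here, but the proposed resolution (``argue at the level of the $U$-tower'') does not supply an actual map.

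What the paper does at this point is concrete and is the missing idea: choose a homogeneous section $\rho\co H_*(B^+)\to H_*(A_{g-2}^+)$ with $v\circ\rho=\mathrm{id}$ (possible since $v$ is surjective), and observe that $\rho h$ strictly lowers grading because, by the shift computation you cite, $h$ shifts by $2(g-2)>0$ less than $v$ (here $g>2$ is used). Hence the geometric series
\[
\Phi=\mathrm{id}-\rho h+(\rho h)^2-\cdots
\]
is a well-defined endomorphism of $H_*(A_{g-2}^+)$ and carries $\ker v$ into $\ker(v+h)$. Setting $a'=\Phi(a)=a+(\text{lower grading terms})$ gives $Ua'=Ua+(\text{lower grading terms})\ne 0$. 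This geometric-series correction is exactly the mechanism your outline lacks.
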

\begin{proof}
Consider the short exact sequence of chain complexes
\begin{equation}\label{eq:vShort}
\xymatrix{
0\ar[r] &C\{i<0, j\ge g-2\}\ar[r] &A_{g-2}^+\ar[r]^{v_{g-2}^+}&B^+ \ar[r] &0,
}
\end{equation}
which induces an exact triangle. Since $Z$ is a generalized L-space, 
\[v=(v_{g-2}^+)_*\co H_*(A_{g-2}^+)\to H_*(B^+)\]
is surjective. So 
\[H_*(C\{i<0, j\ge g-2\})\cong\ker v
\]
as a $\mathbb Q[U]$--module.

By \cite[Subsection~4.8]{OSzIntSurg}, $CF^+(Z_0(K),[\widehat{F}],g-2)$ is quasi-isomorphic to the mapping cone of 
\[
v_{g-2}^++ h_{g-2}^+\co A_k^+\to B^+.
\]
By (\ref{eq:vhDiff}), $v_{g-2}^+$ and $h_{g-2}^+$ have different grading shifts. Since $Z$ is a generalized L-space, 
\[v+h=(v_{g-2}^+)_*+(h_{g-2}^+)_*\co H_*(A_{g-2}^+)\to H_*(B^+)\]
is surjective. So 
\[HF^+(Z_0(K),[\widehat{F}],g-2)\cong\ker (v+h)
\]
as a $\mathbb Q[U]$--module.

Since $v$ is homogeneous and surjective,
there exists a homogeneous homomorphism $\rho\co H_*(B^+)\to H_*(A^+_{g-2})$ satisfying $$v\circ\rho=\mathrm{id}.$$
By (\ref{eq:vhDiff}) and the assumption that $g(F)>2$, the grading shift of $h$ is strictly less than the grading shift of $v$, so the grading shift of $\rho h$ is negative. As the grading of $H_*(A^+_{g-2})$ is bounded from below, for any $x\in H_*(A^+_{g-2})$, $(\rho h)^m(x)=0$ when $m$ is sufficiently large. So the map
$$\mathrm{id}-\rho h+(\rho h)^2-(\rho h)^3+\cdots\co H_*(A^+_{g-2})\to H_*(A^+_{g-2})$$
is well-defined, and it maps $\ker v$ to $\ker (v+h)$.

Assume that $a\in\ker v$ is a homogeneous element with $Ua\ne0$. Then
\[
a'=(\mathrm{id}-\rho h+(\rho h)^2-(\rho h)^3+\cdots)(a)=a+\text{lower grading terms}\in\ker(v+h)
\]
so 
\[
Ua'=Ua+\text{lower grading terms}
\]
which is nonzero since $Ua\ne0$.
\end{proof}

We will use the following elementary lemma in linear algebra.

\begin{lem}\label{lem:Tensor}
Let $\tensor V,\tensor W$ be two linear spaces over a field $\mathbb F$, and let $\tensor V_1,\tensor W_1$ be their subspaces, respectively. If $v\in \tensor V\setminus\tensor V_1$, $w\in \tensor W\setminus\tensor W_1$, then 
\[v\otimes w\notin \tensor V_1\otimes\tensor W+\tensor V\otimes\tensor W_1.\]
\end{lem}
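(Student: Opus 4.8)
The plan is to prove the contrapositive, or rather to work directly with a complement decomposition. Since $v \in \tensor V \setminus \tensor V_1$, I can extend a basis of $\tensor V_1$ to a basis of $\tensor V$ in which $v$ appears as a basis vector; concretely, choose a subspace complement so that $\tensor V = \tensor V_1 \oplus \langle v \rangle \oplus \tensor V_2$ for some $\tensor V_2$, and similarly $\tensor W = \tensor W_1 \oplus \langle w \rangle \oplus \tensor W_2$. The point of choosing $v$ and $w$ themselves as the ``new'' basis vectors is that the tensor product then splits along these decompositions, and I can read off exactly which summand $v \otimes w$ lives in.

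The key computation is the following. Expanding the tensor product of the two direct-sum decompositions, $\tensor V \otimes \tensor W$ decomposes as a direct sum of nine pieces indexed by the pairs of summands. The subspace $\tensor V_1 \otimes \tensor W + \tensor V \otimes \tensor W_1$ is precisely the sum of all those pieces in which at least one factor comes from $\tensor V_1$ or $\tensor W_1$; that is, it is the direct sum of every piece \emph{except} the three of the form $\langle v\rangle \otimes \langle w\rangle$, $\langle v \rangle \otimes \tensor W_2$, $\tensor V_2 \otimes \langle w\rangle$, and $\tensor V_2 \otimes \tensor W_2$. In particular the one-dimensional summand $\langle v \rangle \otimes \langle w \rangle$, which contains the nonzero element $v \otimes w$, is a direct complement to (a subspace containing) $\tensor V_1 \otimes \tensor W + \tensor V \otimes \tensor W_1$. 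Since $v \otimes w \ne 0$ and it lies in a summand disjoint from $\tensor V_1 \otimes \tensor W + \tensor V \otimes \tensor W_1$, it cannot belong to the latter.

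Concretely, I would set up the projection argument to avoid fussing over infinite-dimensional bases. Let $p \co \tensor V \to \langle v\rangle$ be the projection killing $\tensor V_1 \oplus \tensor V_2$, and $q \co \tensor W \to \langle w\rangle$ the analogous projection killing $\tensor W_1 \oplus \tensor W_2$; these are well-defined linear maps with $p(v) = v$, $q(w) = w$, $p|_{\tensor V_1} = 0$, $q|_{\tensor W_1} = 0$. The induced map $p \otimes q \co \tensor V \otimes \tensor W \to \langle v \rangle \otimes \langle w \rangle$ sends $v \otimes w$ to the nonzero element $v \otimes w$, but annihilates both $\tensor V_1 \otimes \tensor W$ (because $p$ kills $\tensor V_1$) and $\tensor V \otimes \tensor W_1$ (because $q$ kills $\tensor W_1$), hence annihilates their sum. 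A linear map separating $v \otimes w$ from the subspace $\tensor V_1 \otimes \tensor W + \tensor V \otimes \tensor W_1$ immediately gives $v \otimes w \notin \tensor V_1 \otimes \tensor W + \tensor V \otimes \tensor W_1$.

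I expect there to be essentially no obstacle here; this is a routine linear-algebra fact. The only point requiring any care is the choice of a direct-sum complement for $\tensor V_1$ inside $\tensor V$ containing the line $\langle v\rangle$, which uses that $v \notin \tensor V_1$ (so $\{v\}$ extends to a basis of a complement of $\tensor V_1$); existence of such complements holds over any field, with no finite-dimensionality hypothesis, so the projection maps $p, q$ are legitimate. The cleanest writeup is the projection argument of the last paragraph, since it sidesteps any explicit enumeration of the nine summands.
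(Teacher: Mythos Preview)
Your argument is correct and is essentially the same as the paper's: both choose a complement of $\tensor V_1$ in $\tensor V$ containing $v$ (and likewise for $\tensor W$), then observe that $v\otimes w$ lands in a summand disjoint from $\tensor V_1\otimes\tensor W+\tensor V\otimes\tensor W_1$. The paper phrases this via explicit bases, while you phrase it via the projection $p\otimes q$; the projection version has the mild advantage of working verbatim in infinite dimensions, but the underlying idea is identical.
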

\begin{proof}
Suppose that $\dim \tensor V=m$, $\dim \tensor V_1=m_1$, $\dim \tensor W=n$, $\dim \tensor W_1=n_1$. 
We can choose a basis 
\[
v_1,\dots,v_m
\]
of $\tensor V$, such that $v_1,\dots,v_{m_1}$ is a basis of $\tensor V_1$, and $v=v_{m_1+1}$. Similarly, we choose a basis 
\[
w_1,\dots,w_n
\]
of $\tensor W$, such that $w_1,\dots,w_{n_1}$ is a basis of $\tensor W_1$, and $w=w_{n_1+1}$. Then
\[
v_i\otimes w_j, 1\le i\le m, 1\le j\le n
\]
is a basis for $\tensor V\otimes\tensor W$.
Now $\tensor V_1\otimes\tensor W+\tensor V\otimes\tensor W_1$ is spanned by
\[
v_i\otimes w_j, 1\le i\le m_1 \text{ or } 1\le j\le n_1.
\]
So $v\otimes w=v_{m_1+1}\otimes w_{n_1+1}$ is not in this subspace.
\end{proof}

Let $\partial$ be the differential in $C=CFK^{\infty}$, $\partial_0$ be the component of $\partial$ which preserves the $(i,j)$--grading, $\partial_z$ be the component of $\partial$ which decreases the $(i,j)$--grading by $(0,1)$, and $\partial_w$ be the component which decreases the $(i,j)$--grading by $(1,0)$.
Since $\partial^2=0$, we have
\begin{equation}\label{eq:dChain}
\partial_{z}\circ\partial_0+\partial_0\circ\partial_{z}=0,\quad \partial_{w}\circ\partial_0+\partial_0\circ\partial_{w}=0,
\end{equation} 
and
\begin{equation}\label{eq:d^2}
\partial_w\circ\partial_z+\partial_{zw}\circ\partial_0+\partial_0\circ\partial_{zw}=0\quad \text{on } C(0,g).
\end{equation} 
It follows from (\ref{eq:dChain}) that $\partial_z$ and $\partial_w$ induces homomorphisms on the homology with respect to the differential $\partial_0$, denoted by $(\partial_z)_*$ and $(\partial_w)_*$.
By (\ref{eq:d^2}), 
\begin{equation}\label{eq:dwdz}
(\partial_w)_*\circ(\partial_z)_*=0
\end{equation}
 on $H_*( C(0,g))$.

\begin{thm}\label{thm:General}
Let $Z$ be a generalized L-space, $K\subset Z$ be a null-homologous knot. Let $F$ be a Seifert surface of $K$ with genus $g>2$. Let $d\in\mathbb Q$ satisfy 
\begin{equation}\label{eq:d+1Trivial}
\widehat{HFK}_{d\pm1}(Z,K,[F],g)=0.
\end{equation}
If there exist two elements $\gamma_1,\gamma_2\in H_1(F)$ with $\gamma_1\cdot\gamma_2\ne0$, such that the images of $\gamma_1,\gamma_2$ in $H_1(Z)$ are linearly dependent, then 
\[
\mathrm{rank}\widehat{HFK}_{d}(Z,K,[F],g)\le\mathrm{rank}\widehat{HFK}_{d-1}(Z,K,[F],g-1).
\]
\end{thm}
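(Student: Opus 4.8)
The plan is to deduce the rank inequality from two structural facts about the maps induced by $\partial_z$ and $\partial_w$ on associated graded homology, and to obtain the decisive one of these from the vanishing of the $U$--action. Write $C(0,s)$ for $C\{i=0,j=s\}$, so that $\widehat{HFK}(Z,K,[F],s)=H_*(C(0,s))$ with respect to $\partial_0$. Since the top Alexander grading is $g$, we have $C(-1,g)=0$, so $\partial_w$ vanishes on $C(0,g)$. Tracking the $(i,j)$-- and Maslov gradings, $(\partial_z)_*$ gives
\[
(\partial_z)_*\co \widehat{HFK}_d(Z,K,[F],g)\to\widehat{HFK}_{d-1}(Z,K,[F],g-1),
\]
while $\partial_w\co C(0,g-1)\to C(-1,g-1)=U\cdot C(0,g)$ gives, after the identification $H_*(U\cdot C(0,g))\cong\widehat{HFK}(Z,K,[F],g)$,
\[
(\partial_w)_*\co \widehat{HFK}_{d-1}(Z,K,[F],g-1)\to\widehat{HFK}_d(Z,K,[F],g).
\]
Set $V=\widehat{HFK}_d(Z,K,[F],g)$ and $W=\widehat{HFK}_{d-1}(Z,K,[F],g-1)$. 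By (\ref{eq:dwdz}) we have $(\partial_w)_*\circ(\partial_z)_*=0$, hence $\im(\partial_z)_*\subseteq\ker(\partial_w)_*$. If I can also prove $\ker(\partial_z)_*|_V\subseteq\im(\partial_w)_*$, then a one-line count gives $\dim V=\dim\ker(\partial_z)_*+\dim\im(\partial_z)_*\le\dim\im(\partial_w)_*+(\dim W-\dim\im(\partial_w)_*)=\dim W$, which is exactly the claimed inequality.

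Thus the heart of the argument is the inclusion $\ker(\partial_z)_*|_V\subseteq\im(\partial_w)_*$, and this is where the $U$--action enters. First I would verify the hypotheses of Theorem~\ref{thm:ClosedSecond} for $Y=Z_0(K)$ and $G=\widehat F$: one has $H_1(\widehat F)\cong H_1(F)$, and since curves on a Seifert surface have vanishing linking number with $K$, the classes $\gamma_1,\gamma_2$ map into $H_1(Z_0(K))\cong H_1(Z)\oplus\mathbb Z$ with trivial meridian component, so their linear dependence in $H_1(Z)$ persists in $H_1(Z_0(K))$, while $\gamma_1\cdot\gamma_2\ne0$ is preserved. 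Theorem~\ref{thm:ClosedSecond} then gives $U=0$ on $HF^+(Z_0(K),[\widehat F],g-2;\mathbb Q)$, and the contrapositive of Proposition~\ref{prop:ZeroSurg} upgrades this to $U=0$ on $H_*(C\{i<0,j\ge g-2\})$.

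Next I would analyze the subquotient complex $\mathcal C=C\{i<0,j\ge g-2\}$ directly. Using the Alexander bound $j-i\le g$, one checks that $\mathcal C$ consists of exactly three pieces, $U\cdot C(0,g)$ at $(-1,g-1)$, $U\cdot C(0,g-1)$ at $(-1,g-2)$, and $U^2\cdot C(0,g)$ at $(-2,g-2)$, with the only cross-differentials being $\partial_z$ (top to middle), $\partial_w$ (middle to bottom), and $\partial_{zw}$ (top to bottom); moreover $U$ carries the top piece isomorphically onto the bottom piece and annihilates the other two. Now take a homogeneous $[\alpha]\in V$ with $(\partial_z)_*[\alpha]=0$, so $\partial_z\alpha=\partial_0\beta$ for some $\beta\in C(0,g-1)$. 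By the relations (\ref{eq:dChain}) and (\ref{eq:d^2}), the element $\partial_{zw}\alpha-\partial_w\beta$ is a $\partial_0$--cycle in $U\cdot C(0,g)$ whose class lies in $\widehat{HFK}_{d+1}(Z,K,[F],g)$, which vanishes by (\ref{eq:d+1Trivial}); hence it equals $\partial_0\gamma$ and $\Xi=U\alpha-U\beta-U\gamma$ is a genuine cycle in $\mathcal C$. Applying $U$ kills the middle and bottom contributions and leaves $U\Xi=U^2\alpha$ in the bottom piece, so $U=0$ on $H_*(\mathcal C)$ forces $U^2\alpha=D\Theta$. Decomposing $\Theta$ into its three components and using $\widehat{HFK}_{d-1}(Z,K,[F],g)=0$ to cancel the top component, the bottom equation collapses to $U^2\alpha-U\partial_w\chi=\partial_0\theta_B$ with $\chi\in C(0,g-1)$ a $\partial_0$--cycle of Maslov grading $d-1$; cancelling one factor of $U$ (an isomorphism on homology here) yields $[\alpha]=(\partial_w)_*[\chi]\in\im(\partial_w)_*$, as required.

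I expect the main obstacle to be precisely this last chain-level bookkeeping inside $\mathcal C$: first producing the honest cycle $\Xi$, then descending from $U^2\alpha=D\Theta$ to a statement about $[\alpha]$ itself, and in both passes arranging that the two vanishing hypotheses packaged in (\ref{eq:d+1Trivial}) perform exactly the needed cancellations. By contrast, the transfer of the linear-dependence hypothesis to $Z_0(K)$ and the concluding rank count are routine, and the three-piece description of $\mathcal C$ is an immediate consequence of the Alexander grading bound $j-i\le g$.
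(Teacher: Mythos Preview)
Your proposal is correct and follows essentially the same route as the paper. The paper argues by contradiction --- assuming $\mathrm{rank}\,\widehat{HFK}_d(g)>\mathrm{rank}\,\widehat{HFK}_{d-1}(g-1)$, it finds $x\in\ker(\partial_z)_*$ with $Ux\notin\im(\partial_w)_*$, builds from it a cycle in $C\{i<0,j\ge g-2\}$ on which $U$ acts nontrivially, and invokes Proposition~\ref{prop:ZeroSurg} and Theorem~\ref{thm:ClosedSecond} for the contradiction --- whereas you run the identical chain-level computation in the forward direction, first using Theorem~\ref{thm:ClosedSecond} and Proposition~\ref{prop:ZeroSurg} to obtain $U=0$ on $H_*(\mathcal C)$ and then deducing $\ker(\partial_z)_*\subseteq\im(\partial_w)_*$. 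The two uses of (\ref{eq:d+1Trivial}) you anticipate match the paper's exactly: the vanishing at $d+1$ produces the genuine cycle in $\mathcal C$, and the vanishing at $d-1$ kills the top component of the bounding chain $\Theta$. Your explicit check that the linear-dependence hypothesis passes to $H_1(Z_0(K))$ is a small addition the paper leaves implicit.
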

\begin{proof}
The chain complex
$C\{i<0, j\ge g-2\}$ has the form
\begin{equation}\label{eq:MCg-2}
\xymatrixcolsep{5pc}
\begin{xymatrix}{  
& C(-1,g-1)\ar[d]^{\partial_z}\ar[dl]_{\partial_{zw}}\\
C(-2,g-2)&C(-1,g-2)\ar[l]_{\partial_w},}
\end{xymatrix}
\end{equation}
where
\[C_{*-2}(-1,g-1)\cong C_{*-4}(-2,g-2)\cong\widehat{CFK}_*(Z,K,[F],g),\] 
and
\[ C_{*-2}(-1,g-2)\cong\widehat{CFK}_*(Z,K,[F],g-1).
\]

By abuse of notation, we will use $\partial_z$ and $\partial_w$ to denote their restrictions
\[\partial_z\co \widehat{CFK}_{d}(Z,K,[F],g)\to \widehat{CFK}_{d-1}(Z,K,[F],g-1)\] 
and \[\partial_w\co \widehat{CFK}_{d-1}(Z,K,[F],g-1)\to \widehat{CFK}_{d}(Z,K,[F],g).\]

Using (\ref{eq:dwdz}), we have
\begin{eqnarray*}
&&\mathrm{rank}\ker(\partial_z)_*\\
&=&\mathrm{rank}\widehat{HFK}_d(Z,K,[F],g)-\mathrm{rank}\:\im(\partial_z)_*\\
&\ge&\mathrm{rank}\widehat{HFK}_d(Z,K,[F],g)-\mathrm{rank}\ker(\partial_w)_*\\
&=&\mathrm{rank}\widehat{HFK}_d(Z,K,[F],g)-\mathrm{rank}\widehat{HFK}_{d-1}(Z,K,[F],g-1)+\mathrm{rank}\:\im(\partial_w)_*.
\end{eqnarray*}
If 
\begin{equation}\label{eq:RankIneq}
\mathrm{rank}\widehat{HFK}_d(Z,K,[F],g)>\mathrm{rank}\widehat{HFK}_{d-1}(Z,K,[F],g-1),
\end{equation}
 then
\[
\mathrm{rank}\ker(\partial_z)_*>\mathrm{rank}\:\im(\partial_w)_*,
\]
so there exists an element $x\in \ker(\partial_z)_*$, such that $Ux\notin\im(\partial_w)_*$. 
Let $\xi\in C_{d-2}(-1,g-1)$ be a closed chain representing $x$, then $\partial_z(\xi)$ is an exact chain in $C_{d-3}(-1,g-2)$. So there exists an element $\eta\in C_{d-2}(-1,g-2)$ with $\partial_0 \eta=\partial_z(\xi)$. By (\ref{eq:dChain}) and (\ref{eq:d^2}), 
\[
\partial_0\partial_w \eta=-\partial_w\partial_0 \eta=-\partial_w\partial_z(\xi)=\partial_0\partial_{zw}(\xi).
\]
So $\partial_w\eta-\partial_{zw}(\xi)$ is a closed chain in $C_{d-3}(-2,g-2)\cong\widehat{CFK}_{d+1}(Z,K,[F],g)$. By (\ref{eq:d+1Trivial}), $\partial_w\eta-\partial_{zw}(\xi)$ is exact, so there exists an element $\zeta\in C_{d-2}(-2,g-2)$ with $\partial_0\zeta=\partial_w\eta-\partial_{zw}(\xi)$. This means that $\xi-\eta+\zeta$ is a cycle in the mapping cone (\ref{eq:MCg-2}).

Now we want to prove $U(\xi-\eta+\zeta)=U\xi$ is not exact in (\ref{eq:MCg-2}). Otherwise, assume
\begin{equation}\label{eq:Uxi}
U\xi=\partial(\xi'+\eta'+\zeta'),
\end{equation} 
where 
\[\xi'\in C_{d-3}(-1,g-1), \eta'\in C_{d-3}(-1,g-2), \zeta'\in C_{d-3}(-2,g-2).\]
Considering the components of (\ref{eq:Uxi}), we get
\begin{eqnarray}
0&=&\partial_0\xi',\label{eq:xi'}\\
0&=& \partial_z\xi'+\partial_0\eta',\label{eq:eta'}\\ 
U\xi &=&\partial_{zw}\xi'+\partial_w\eta'+\partial_0\zeta'.\label{eq:zeta'}
\end{eqnarray}
By (\ref{eq:xi'}), $\xi'$ is a cycle in $C_{d-3}(-1,g-1)\cong\widehat{CFK}_{d-1}(Z,K,[F],g)$. By (\ref{eq:d+1Trivial}), $\xi'$ is exact, so there exists $\omega\in  C_{d-2}(-1,g-1)$ with $\partial_0\omega=\xi'$.
Using (\ref{eq:dChain}) and (\ref{eq:eta'}), we get
\[
\partial_0(\eta'-\partial_z\omega)=0.
\]
Using (\ref{eq:d^2}) and (\ref{eq:zeta'}), we get
\[
U\xi=-\partial_0\partial_{zw}\omega+\partial_w(\eta'-\partial_z\omega)+\partial_0\zeta',
\]
which means that $U\xi$ is homologous to an element in $\partial_w(\ker\partial_0)$.
Since $[U\xi]=Ux\notin\im(\partial_w)_*$, we get a contradiction.

Now we have proved that $U\ne0$ in the mapping cone (\ref{eq:MCg-2}).
By Proposition~\ref{prop:ZeroSurg}, we have
$U\ne0$ in $HF^+(Z_0(K),[\widehat F],g-2)$, a contradiction to Theorem~\ref{thm:ClosedSecond}.
\end{proof}

\begin{proof}[Proof of Theorem~\ref{thm:SecondTerm}]
When $g>2$, this follows from Theorem~\ref{thm:General}.

If $g=2$, we assume (\ref{eq:RankIneq}) holds.
As in the proof of Theorem~\ref{thm:General}, there exists an element $x\in \ker(\partial_z)_*$, such that $Ux\notin\im(\partial_w)_*$. Consider the element 
\[x\otimes x\in \widehat{HFK}_d(Z,K,[F],g)\otimes\widehat{HFK}_d(Z,K,g)\cong \widehat{HFK}_{2d}(Z\#Z,K\#K,[F\natural F],2g).\]
In the complex $CFK^{\infty}(Z\#Z,K\#K)$, we can check $x\otimes x\in  \ker(\partial_z)_*$, while $U(x\otimes x)\notin\im(\partial_w)_*$ by Lemma~\ref{lem:Tensor}. Let $\gamma_1,\gamma_2$ be a pair of elements in $H_1(F)$ with $\gamma_1\cdot\gamma_2\ne0$. We can think of $\gamma_1,\gamma_2$ as elements in the first summand of $H_1(F\natural F)\cong H_1(F)\oplus H_1(F)$. Then the images of $\gamma_1,\gamma_2$ in $H_1(Z\#Z)$ are linearly dependent. So we can apply Theorem~\ref{thm:ClosedSecond} to get a contradiction as in the proof of Theorem~\ref{thm:General}.

The case $g=1$ can be proved similarly by considering a three-fold connected sum.
\end{proof}

\end{document}